\newtheorem{theorem}{Theorem}[section]
\newtheorem{lemma}[theorem]{Lemma}
\newtheorem{remark}[theorem]{\it Remark}
\numberwithin{equation}{section}
     \definecolor{pass}{rgb}{0,0,0.8}
     \definecolor{pass1}{rgb}{0,0,0.5}
     \definecolor{blue}{rgb}{0,0,0.6}
\newcommand{\pt}{\partial}
\newcommand {\eps} {\varepsilon}
\newcommand{\R}{\mathbb{R}}
\newcommand{\LL}{{\mathcal L}}
\newcommand {\beq} {\begin{equation}}
\newcommand {\eeq} {\end{equation}}
\newcommand {\beqa} {\begin{eqnarray}}
\newcommand {\eeqa} {\end{eqnarray}}
\newcommand {\beqann} {\begin{eqnarray*}}
\newcommand {\eeqann} {\end{eqnarray*}}
\journal{}
\begin{document}

\begin{frontmatter}



\title{
Maximum principle
 for time-fractional parabolic equations
 with a reaction coefficient of arbitrary sign}


\author{Natalia Kopteva\corref{cor1}}
\cortext[cor1]{Department of Mathematics and Statistics, University of Limerick, Limerick, Ireland}
\ead{natalia.kopteva@ul.ie}

\begin{abstract}
We consider
time-fractional parabolic equations with a Caputo time derivative
of order $\alpha\in(0,1)$. 
For such equations, we give
an elementary proof of the weak maximum principle
under no assumptions on the sign of the reaction coefficient.
This proof is also extended for weak solutions, as well as for various types of boundary conditions
and variable-coefficient variable-order
multiterm time-fractional parabolic equations.
\end{abstract}







\end{frontmatter}



\section{Introduction}
\newcommand{\Nh}{\hspace{-0.35pt} }
Proofs of various versions of the maximum principle for classical elliptic and
parabolic equations 
can be found in many textbooks on PDEs 
and their applications \cite{evans,GTru,Protter,sperb}.
In
the\Nh parabolic\Nh case,\Nh
the\Nh presence\Nh of\Nh the\Nh reaction\Nh term\Nh $cu$\Nh with\Nh  $c$\Nh of\Nh arbitrary\Nh sign\Nh
 is\Nh usually addressed
by a simple solution substitution $u=e^{\mu t}\hat u$ with a sufficiently large constant $\mu$.

For time-fractional parabolic equations,
the weak maximum principle
was proved by Luchko \cite{luchko}
 in 2009
 assuming 
a nonnegative coefficient $c$ 
in an equation 
of the form \eqref{problem1} below.
%
%
However, it was not until 2017 that
this restriction on the sign of $c$ was removed
by Luchko and Yamamoto \cite{LuchYama}.
%
%
It is also worth noting that the authors,
who are unquestionably leading experts in the area,
concluded that the substitution $u=e^{\mu t}\hat u$ does not work in the time-fractional case,
%
and thus
%
they devised an alternative new proof method.
The latter method relies on eigenfunction expansions and a fixed point theorem,
so naturally applies to weak solutions, but, apart from being quite intricate,
it is limited to self-adjoint spatial operators with time-independent coefficients.

The purpose of this paper is to give an elementary proof of the weak maximum principle
for strong solutions
of time-fractional parabolic equations with  $c$ of arbitrary sign.
This new proof, surprisingly, combines the fractional-derivative representation used by Luchko in \cite{luchko}
(see \eqref{aux_lem_int}) with the substitution $u=e^{\mu t}\hat u$.
%
We shall also extend this proof for
weak solutions in $\color{blue}H^1(\Omega)$, 
as well as for
various types of boundary conditions and variable-coefficient variable-order
multiterm time-fractional parabolic equations.

We shall consider fractional-order parabolic equations, of order $\alpha\in(0,1)$, of type 
\beq
D_t^{\alpha}u+\LL u+c(x,t)\, u=f(x,t),\quad\mbox{where}\quad
c(x,t)\ge-\lambda,
\label{problem1}
\eeq
for $(x,t)\in\Omega\times(0,T]$,
subject to the initial condition $u(\cdot,0)=u_0$ in $\Omega$, and the boundary
condition $\color{blue}u=g$ on $\pt\Omega$ for $t>0$.
This problem is posed in a bounded Lipschitz domain
$\Omega\subset\R^d$ (where $d\in\{1,2,3\}$), and involves
a spatial linear second-order elliptic operator~$\LL=\LL(t)$ of the form
\beq\label{LL_def}
\LL u := -\sum_{i,j=1}^d \pt_{x_i}\!\bigl(a_{ij}(x,t)\,\pt_{x_j}\!u\bigr) + \sum_{i}^d b_i(x,t)\, \pt_{x_i}\!u,
\eeq
with a symmetric positive definite coefficient matrix $\{a_{ij}(x,t)\}_{i,j=1}^d$  $\forall (x,t)\in\Omega\times(0,T]$.
%
The Caputo fractional derivative in time, denoted 
by $D_t^\alpha$, is  defined \cite{Diet10},
for $t>0$, by
\begin{equation}\label{CaputoEquiv}
D_t^{\alpha} u := 
 \frac1{\Gamma(1-\alpha)} \int_{0}^t(t-s)^{-\alpha}\,\pt_s v(\cdot, s)\, ds,
\end{equation}
where $\Gamma(\cdot)$ is the Gamma function, and $\pt_t$ denotes the partial/ordinary derivative in~$t$.

Note that maximum principles for  time-fractional parabolic equations
of type \eqref{problem1}, with a reaction coefficient of arbitrary sign,
can be immediately applied to
extend
the new a posteriori error estimation methodology \cite{NK_AML,NK_MSt_multiterm} to the semilinear case.
It is also worth mentioning that analogous results have been used to prove existence and uniqueness of solutions of classical semilinear parabolic equations \cite{pao_book,sperb}, a direction still to be fully explored in the context of fractional parabolic equations.

\smallskip

\noindent{\it Notation.} We use the standard inner product $\langle\cdot,\cdot\rangle$ and the norm $\|\cdot\|$
in the space $L_2(\Omega)$, as well as the standard spaces 
$L_\infty(\Omega)$,\,   $H^1(\Omega)=W^{1,2}(\Omega)$,\, $H^1_0(\Omega)$,\,
$W^{1,\infty}(t',t'';\,L_\infty(\Omega))$,\, and $C([0,T]; L_\infty(\Omega))$
(see \cite[Sec.\,5.9.2]{evans} for the notation used for functions of $x$ and~$t$).
The notation $v^+:=\max\{0,\,v\}$ is used for the positive part of a generic function $v$.

\section{Maximum principle for strong solutions}

The key role in our analysis will be played by the following lemma.

\begin{lemma}\label{lem_main}
For any fixed $\lambda$ and $\mu$ such that
$0\le \lambda\le \mu^\alpha $,
let $v\in C[0,T]\cap W^{1,\infty}(\epsilon,t)$ for any $0<\epsilon<t\le T$, 
\beq\label{main_cond}
v(0)\le 0,\quad\mbox{and}\quad
D_t^\alpha v(t_0)-\lambda v(t_0)\le 0\quad\mbox{for~some}\;\; t_0\in(0,T].
\eeq
Then the function $\hat v(t):=e^{-\mu t}v(t)$ cannot attain a positive
$\max_{[0,t_0]}\!\hat v\,$
at $t=t_0$.
\end{lemma}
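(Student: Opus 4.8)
The plan is to argue by contradiction. Suppose, to the contrary, that $\hat v$ attains a positive maximum at $t_0$, so that $\hat v(s)\le\hat v(t_0)$ for all $s\in[0,t_0]$ and $\hat v(t_0)>0$; I will show that this forces $D_t^\alpha v(t_0)-\lambda v(t_0)>0$, contradicting the second part of \eqref{main_cond}. The starting point is the pointwise representation of the Caputo derivative obtained from \eqref{CaputoEquiv} by integration by parts (the identity used by Luchko, cf.\ \eqref{aux_lem_int}),
$$D_t^\alpha v(t_0)=\frac1{\Gamma(1-\alpha)}\Bigl[\frac{v(t_0)-v(0)}{t_0^\alpha}+\alpha\int_0^{t_0}\frac{v(t_0)-v(s)}{(t_0-s)^{\alpha+1}}\,ds\Bigr],$$
whose convergence is guaranteed by the assumed regularity $v\in C[0,T]\cap W^{1,\infty}(\epsilon,t)$ (Lipschitz control near $s=t_0$ makes the kernel integrable there, while continuity handles $s=0$).

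Next I substitute $v=e^{\mu t}\hat v$ and exploit the maximum. Since $e^{\mu s}>0$, the inequality $\hat v(s)\le\hat v(t_0)$ gives $v(t_0)-v(s)=e^{\mu t_0}\hat v(t_0)-e^{\mu s}\hat v(s)\ge\hat v(t_0)\,(e^{\mu t_0}-e^{\mu s})\ge0$ for every $s\in[0,t_0]$, whereas $v(0)\le0$ upgrades the boundary contribution to $v(t_0)-v(0)\ge v(t_0)=e^{\mu t_0}\hat v(t_0)$. Because the kernel $(t_0-s)^{-\alpha-1}$ is positive, inserting these bounds into the representation yields
$$\Gamma(1-\alpha)\,D_t^\alpha v(t_0)\ge\hat v(t_0)\Bigl[\frac{e^{\mu t_0}}{t_0^\alpha}+\alpha\int_0^{t_0}\frac{e^{\mu t_0}-e^{\mu s}}{(t_0-s)^{\alpha+1}}\,ds\Bigr].$$
Applying the same representation to the smooth function $e^{\mu t}$, I recognise the bracket as $\Gamma(1-\alpha)\,D_t^\alpha e^{\mu t}\big|_{t_0}+t_0^{-\alpha}$, where the extra $t_0^{-\alpha}$ is exactly the gain supplied by $v(0)\le0$.

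It remains to bound $D_t^\alpha e^{\mu t}$ from below, and here lies the main subtlety: the tempting comparison $D_t^\alpha e^{\mu t}\ge\mu^\alpha e^{\mu t}$ is in fact \emph{false}, so $e^{\mu t}$ cannot be used as a naive supersolution. Computing directly from \eqref{CaputoEquiv} with the change of variable $r=t_0-s$ gives $D_t^\alpha e^{\mu t}\big|_{t_0}=\frac{\mu e^{\mu t_0}}{\Gamma(1-\alpha)}\int_0^{t_0}r^{-\alpha}e^{-\mu r}\,dr$, and since $\int_0^\infty r^{-\alpha}e^{-\mu r}\,dr=\mu^{\alpha-1}\Gamma(1-\alpha)$ this equals $\mu^\alpha e^{\mu t_0}$ minus the tail deficit $\frac{\mu e^{\mu t_0}}{\Gamma(1-\alpha)}\int_{t_0}^\infty r^{-\alpha}e^{-\mu r}\,dr$. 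The whole construction hinges on the observation that the boundary gain dominates this deficit: using $r^{-\alpha}<t_0^{-\alpha}$ for $r>t_0$ gives $\mu e^{\mu t_0}\int_{t_0}^\infty r^{-\alpha}e^{-\mu r}\,dr<t_0^{-\alpha}$ strictly. Combining the displays with $\lambda\le\mu^\alpha$ and $\hat v(t_0)>0$,
$$\Gamma(1-\alpha)\bigl(D_t^\alpha v(t_0)-\lambda v(t_0)\bigr)\ge\hat v(t_0)\Bigl[\Gamma(1-\alpha)(\mu^\alpha-\lambda)e^{\mu t_0}+t_0^{-\alpha}-\mu e^{\mu t_0}\!\!\int_{t_0}^\infty\!\! r^{-\alpha}e^{-\mu r}\,dr\Bigr]>0,$$
the desired contradiction. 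The degenerate case $\mu=0$ (which forces $\lambda=0$) is immediate, since then the boundary term $\hat v(t_0)/t_0^\alpha>0$ alone already makes $D_t^\alpha v(t_0)>0$. Thus the only genuine obstacle is conceptual rather than computational: recognising that $e^{\mu t}$ is not a supersolution in the fractional setting, and that the initial constraint $v(0)\le0$ contributes precisely the term $t_0^{-\alpha}$ needed to offset the tail of $D_t^\alpha e^{\mu t}$.
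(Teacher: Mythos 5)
Your proposal is correct and follows essentially the same route as the paper's own proof: both start from the representation \eqref{aux_lem_int}, insert $v=e^{\mu t}\hat v$, exploit the assumed positive maximum of $\hat v$ at $t_0$ to control the memory integral, and rest on the same Gamma-integral evaluation (the paper's \eqref{Lambda_def}). The only difference is bookkeeping: integrating the paper's kernel $\omega(t_0)=\int_{t_0}^\infty\alpha s^{-\alpha-1}e^{-\mu s}\,ds$ from \eqref{omega} by parts gives $e^{\mu t_0}\omega(t_0)=t_0^{-\alpha}-\mu e^{\mu t_0}\int_{t_0}^\infty s^{-\alpha}e^{-\mu s}\,ds$, so your ``initial-condition gain minus tail deficit'' is exactly the paper's manifestly positive $\omega(t_0)$, and your final bracket equals $e^{\mu t_0}\bigl[\omega(t_0)+\Gamma(1-\alpha)(\mu^\alpha-\lambda)\bigr]$, i.e., the coefficient appearing in the paper's inequality \eqref{main_lem_ine}.
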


\begin{remark}
One important feature of
the above lemma
is that the differential inequality in \eqref{main_cond} is assumed at a single point $t_0>0$.
If it were true $\forall\,t_0>0$, then the  result
of Lemma~\ref{lem_main} (and, in fact, that $v(t)\le 0$ $\forall\,t>0$)
would follow from an explicit solution representation for $D_t^\alpha v(t)-\lambda v=w$
 (see, e.g., \cite{Diet10,sakamoto} and a discussion in  \cite[Remark~2.1]{NK_AML}).
\end{remark}

\begin{proof}
In view of \eqref{CaputoEquiv}, replacing $\pt_s v(s)$ in $D_t^\alpha v(t)$ by $\pt_s\{v(s)- v( t)\}$ and then integrating by parts
, one gets
(see also \cite{luchko,Brunner_etal_2015})
\beq\label{aux_lem_int}
\Gamma(1-\alpha)\,D_t^\alpha v(t)
=t^{-\alpha} \{v(t)-v(0)\}+\int_{0}^t\!\alpha(t-s)^{-\alpha-1}\, \{v(t)- v( s)\}\, ds.
\eeq
Next, dividing both parts by $e^{\mu t}$ and using $v(t)=e^{\mu t}\,\hat v(t)$ yields
\begin{align}
\notag
e^{-\mu t}\Bigl(\Gamma(1-\alpha)&\,D_t^\alpha v(t)+ t^{-\alpha}v(0)\Bigr)  \\ \notag
    &=t^{-\alpha}\hat  v(t)+\int_{0}^t\!\alpha(t-s)^{-\alpha-1}\, \{\hat v(t)- e^{-\mu(t- s)}\hat v(s)\}\, ds \\
    &=\bigl[t^{-\alpha}+\Lambda(t)\bigr]\,\hat  v(t)+\int_{0}^t\!\alpha(t-s)^{-\alpha-1}\,e^{-\mu(t- s)} \{\hat v(t)-\hat v( s)\}\, ds,
\label{Dt_new}
\end{align}
where
$$
\Lambda(t):=\int_{0}^t\!\alpha(t-s)^{-\alpha-1}\, \{1- e^{-\mu(t- s)}\}\, ds
=\int_{0}^t\!\alpha s^{-\alpha-1}\, \{1- e^{-\mu s}\}\, ds.
$$
It will be helpful to rewrite \eqref{Dt_new} using the  notation $\Lambda_\infty:=\Lambda(\infty)$ and
\beq\label{omega}
\omega(t):=\int_t^\infty \!\!\alpha s^{-\alpha-1} e^{-\mu s}ds.
\eeq
With this new notation, a simple calculation shows that $\Lambda_\infty-\Lambda(t)=t^{-\alpha}-\omega(t)$,
so
$t^{-\alpha}+\Lambda(t)=\omega(t)+\Lambda_\infty$, so  \eqref{Dt_new} becomes
\beq\label{Dhat_v}
e^{-\mu t}\Bigl(\Gamma(1-\alpha)\,D_t^\alpha v(t)+ t^{-\alpha}v(0)\Bigr)
=
\bigl[\omega(t)+\Lambda_\infty\bigr]\,\hat  v(t)+\int_{0}^t\!\pt_s\omega(t- s)\, \{\hat v(t)-\hat v( s)\}\, ds.
\eeq
Note that here $\omega(t)>0$, while $\pt_s\omega(t- s)>0$ (in view of $\omega'(t)<0$).
 Also, for $\Lambda_\infty$, using $\hat s:=\mu s$ and then integrating by parts, one gets
\beq\label{Lambda_def}
\Lambda_\infty=\mu^{\alpha}\alpha\int_{0}^{\infty}\!\! {\hat s}^{-\alpha-1}\, \{1- e^{-\hat s}\}\, d\hat s
=\mu^{\alpha}\int_{0}^{\infty}\!\! { s}^{-\alpha}\, e^{- s} d s=\Gamma(1-\alpha)\,\mu^\alpha.
\eeq

To complete the proof,  set $t:=t_0$ in \eqref{Dhat_v},
then subtract $e^{-\mu t_0}\Gamma(1-\alpha)\,\lambda v(t_0)=\Gamma(1-\alpha)\,\lambda \hat v(t_0)$,
and, finally, recall \eqref{main_cond}, which yields
\beq\label{main_lem_ine}
0\ge
\underbrace{\bigl[\omega(t_0)
+\Gamma(1-\alpha)\,(\mu^\alpha-\lambda)
\bigr]}_{\ge\omega(t_0)>0}\,\hat  v(t_0)+\int_{0}^{t_0}\!
\underbrace{\pt_s\omega(t_0- s)}_{>0}\, \{\hat v(t_0)-\hat v( s)\}\,ds.
\eeq
Now, assuming that $\max_{[0,t_0]} \hat v=\hat v(t_0)>0$
immediately leads to a contradiction.
\end{proof}

Now we are prepared to prove the weak maximum principle for strong solutions of \eqref{problem1}--\eqref{CaputoEquiv}
with the coefficients
$\{a_{ij}\}$ in $C^1(\Omega)$, and $\{b_i\}$ and $c$ in $C(\Omega)$.

\begin{theorem}[maximum/comparison principle]\label{th_max_pr}
Let $c(x,t)\ge -\lambda$ $\forall\,(x,t)\in\Omega\times (0,T]$ with some constant $\lambda\ge 0$.
\color{blue}
Suppose that
$u(x,0)\le 0$ $\forall\,x\in\Omega$ and $u(x,t)\le 0$
 $\forall\,(x,t)\in\pt\Omega\times [0,T]$,
and also  $u\in C(\bar\Omega\times[0,T])$, 
$u(\cdot,t)\in C^2(\Omega)$ for any $t>0$,
and   $u(x,\cdot)\in W^{1,\infty}(\epsilon,t)$ for any $x\in\Omega$ and $0<\epsilon<t\le T$.
Then $(D_t^\alpha+\LL+c(x,t))u\le 0$ in $\Omega\times(0,T]$ implies $u\le 0$ in $\bar\Omega\times[0,T]$.
\end{theorem}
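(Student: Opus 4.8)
The plan is to argue by contradiction, using the substitution $\hat u:=e^{-\mu t}u$ together with Lemma~\ref{lem_main} to reduce the PDE statement to a one-point differential inequality along the time line through the spatial maximum. First I would fix a constant $\mu$ large enough that $\mu^\alpha\ge\lambda$ (possible since $\lambda\ge 0$ is a given constant), so that the hypothesis $0\le\lambda\le\mu^\alpha$ of Lemma~\ref{lem_main} is satisfied. Suppose, contrary to the claim, that $u$ is positive somewhere; as $e^{-\mu t}>0$, this is equivalent to $\max_{\bar\Omega\times[0,T]}\hat u>0$. By continuity of $u$ on the compact set $\bar\Omega\times[0,T]$, this maximum is attained at some $(x^*,t^*)$. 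The boundary and initial data force $\hat u\le 0$ on $\partial\Omega\times[0,T]$ and at $t=0$, so the positive maximum must occur at an interior spatial point $x^*\in\Omega$ with $t^*>0$.

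Next I would extract the information available at $(x^*,t^*)$. Since the factor $e^{-\mu t^*}$ is constant in $x$, the point $x^*$ is also an interior spatial maximum of $u(\cdot,t^*)$, hence $\nabla_x u(x^*,t^*)=0$ and the Hessian $\{\partial_{x_ix_j}u(x^*,t^*)\}$ is negative semidefinite. Evaluating $\LL u$ there, the first-order term $\sum_i b_i\,\partial_{x_i}u$ vanishes, and so does the term containing $\partial_{x_i}a_{ij}$ (which requires $\{a_{ij}\}\in C^1$), leaving $\LL u(x^*,t^*)=-\sum_{i,j}a_{ij}\,\partial_{x_ix_j}u(x^*,t^*)$. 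Because $\{a_{ij}\}$ is symmetric positive definite and the Hessian is negative semidefinite, their product has nonpositive trace, whence $\LL u(x^*,t^*)\ge 0$. Moreover $u(x^*,t^*)>0$ and $c(x^*,t^*)\ge-\lambda$ give $c\,u\ge-\lambda u$ at that point.

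I would then combine these with the differential inequality: from $(D_t^\alpha+\LL+c)u\le 0$ one obtains $D_t^\alpha u(x^*,t^*)\le-\LL u(x^*,t^*)-c\,u(x^*,t^*)\le\lambda\,u(x^*,t^*)$, that is, $D_t^\alpha u(x^*,t^*)-\lambda u(x^*,t^*)\le 0$. Setting $v(t):=u(x^*,t)$, all hypotheses of Lemma~\ref{lem_main} hold: $v\in C[0,T]\cap W^{1,\infty}(\epsilon,t)$ by the assumed regularity of $u$, $v(0)=u(x^*,0)\le 0$, and the one-point inequality holds at $t_0=t^*$. The lemma then asserts that $\hat v(t)=e^{-\mu t}v(t)=\hat u(x^*,t)$ cannot attain a positive value of $\max_{[0,t^*]}\hat v$ at $t=t^*$.

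Finally I would close the contradiction: since $(x^*,t^*)$ globally maximizes $\hat u$, we have $\hat u(x^*,t)\le\hat u(x^*,t^*)$ for all $t\le t^*$, so $\hat v(t^*)=\max_{[0,t^*]}\hat v$, and this value equals $\max_{\bar\Omega\times[0,T]}\hat u>0$, directly contradicting Lemma~\ref{lem_main}. Hence no positive value of $u$ is possible and $u\le 0$ on $\bar\Omega\times[0,T]$. I expect the only genuinely nontrivial step to be the sign of the elliptic term at the interior maximum, namely verifying that a symmetric positive definite matrix paired with a negative semidefinite Hessian yields $\sum_{i,j}a_{ij}\,\partial_{x_ix_j}u\le 0$ (via a congruence/$A^{1/2}$ argument); everything else is a routine translation of the classical parabolic maximum-principle argument into the framework of Lemma~\ref{lem_main}.
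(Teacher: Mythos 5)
Your proposal is correct and follows essentially the same route as the paper's own proof: the substitution $\hat u=e^{-\mu t}u$ with $\mu^\alpha\ge\lambda$, the assumption of a positive interior maximum at $(x^*,t^*)$, the deduction $\LL u(x^*,t^*)\ge 0$ and hence $(D_t^\alpha-\lambda)u(x^*,t^*)\le 0$ (using $u(x^*,t^*)>0$), and the contradiction via Lemma~\ref{lem_main} applied to $v(t)=u(x^*,t)$. The only difference is cosmetic: where the paper cites the ``standard argument'' from Gilbarg--Trudinger/Protter--Weinberger for $\LL \hat u(x^*,t^*)\ge 0$, you spell it out (vanishing gradient, negative semidefinite Hessian, trace of the product with the positive definite matrix $\{a_{ij}\}$), which is a perfectly valid expansion of that citation.
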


\begin{proof}
It suffices to show that $u$ cannot take positive values at any point in $\Omega\times(0,T]$.
Let $\hat u:=e^{-\mu t}u$, where the constant
$ \mu^\alpha\ge  \lambda\ge 0$.
Suppose that $\max_{\bar\Omega\times[0,T]}\hat u=\hat u(x^*,t^*)>0$ for some $x^*\in\Omega$ and $t^*\in(0,T]$.
Then a standard argument using the  positive definiteness of the coefficient matrix $\{a_{ij}\}$
shows that  $\LL \hat u(x^*,t^*)\ge 0$
(see, e.g., \cite[proof of Theorem 3.1]{GTru}, \cite[Chap.\,2, Sec.\,2]{Protter}). Hence, $\LL u(x^*,t^*)\ge 0$, so
 $(D_t^\alpha+c(x^*,t^*))(x^*,t^*)\le 0$, and so $(D_t^\alpha-\lambda)u(x^*,t^*)\le 0$.
 Combining the latter with $u(x^*,0)\le 0$,
by
Lemma~\ref{lem_main}, one concludes that $\hat u(x^*,t)$ cannot attain a positive maximum on $[0,t^*]$ at $t=t^*$,
which yields a contradiction.
\end{proof}

\begin{remark}
The representation \eqref{Dhat_v} for $D_t^\alpha v$, may be considered a version of~\eqref{aux_lem_int} written in terms of $\hat v$ and using the kernel $\omega$ from~\eqref{omega}
and $\Lambda_\infty=\Gamma(1-\alpha)\,\mu^\alpha$ from \eqref{Lambda_def}.
Note that setting $\mu:=0$ yields $\omega(t)=t^{-\alpha}$ and $\Lambda_\infty=0$, so in this case \eqref{Dhat_v} is identical
with \eqref{aux_lem_int}.
On the other hand, an integration by parts in \eqref{Dhat_v} yields
\beq\label{CaputoEquiv_hat}
e^{-\mu t}\,\Gamma(1-\alpha)\,D_t^\alpha v(t)
=
\bigl[\Lambda_\infty\,\hat  v(t)-\widetilde\Lambda_\infty(t)\,\hat v(0)\bigr]+\int_{0}^t\!\omega(t- s) \,\pt_s \hat v( s)\, ds.
\eeq
Here we also used $v(0)=\hat v(0)$ and
$$
\widetilde\Lambda_\infty(t):=t^{-\alpha}e^{-\mu t}-\omega(t)
=e^{-\mu t}\!\int_0^\infty
\!\!\!\alpha (s+t)^{-\alpha-1}\, \{1- e^{-\mu s}\}\, ds\le \Lambda_\infty,
\quad \widetilde\Lambda_\infty(0)=\Lambda_\infty.
$$
Likewise, \eqref{CaputoEquiv_hat} may be considered a version of \eqref{CaputoEquiv} written in terms of $\hat v$.
\end{remark}

\section{Maximum principle for weak solutions}

In this section, we shall extend the above maximum principle to the case of weak solutions in $W^{2,1}(\Omega)=H^1(\Omega)$.
For this purpose, we borrow some definitions and general ideas from \cite[Chap.\,8]{GTru}
 (where they are used to establish the weak maximum principle for functions
in $H^1(\Omega)$ in the context of elliptic equations).

Throughout this section, it will be assumed for the elliptic operator $\LL$ from \eqref{LL_def} that its coefficients are in $L_\infty(\Omega\times(0,T))$,
and that
\beq\label{C_LL}
C_{\LL}:=\sup_{\Omega\times(0,T)}\frac{\sum_{i=1}^d b_i^2}{4\lambda_a}
<\infty
\eeq
where $\lambda_a=\lambda_a(x,t)>0$ is the minimal eigenvalue of the
symmetric positive definite
coefficient matrix $\{a_{ij}(x,t)\}$.

Now, $u(\cdot, t)\in \color{blue}H^1(\Omega)$ is said to satisfy \eqref{problem1} in a weak sense if
$\langle (D_t^{\alpha}+\LL + c)u,\,\chi\rangle=\color{blue}\langle f,\,\chi\rangle$
 $\forall \chi\in H_0^1(\Omega)$ and $\forall t\in(0,T]$.
Similarly, $u$ is said to satisfy
$(D_t^{\alpha}+\LL + c)u\le 0$ $(\ge 0)$ in a weak sense if $\langle (D_t^{\alpha}+\LL + c)u,\,\chi\rangle\le 0$ $(\ge 0)$
for all nonnegative $\chi\in H_0^1(\Omega)$.
{\color{blue}Additionally, $w \in H^1(\Omega)$ is said to satisfy $w\le 0$  on $\pt\Omega$
if $w^+\in H^1_0(\Omega)$.
}

\begin{theorem}[maximum principle for weak solutions]\label{th_weak}
Let $\inf_{\Omega\times(0,T)}c\ge -\lambda$ 
for some constant $\lambda\ge 0$.
Suppose that
{\color{blue}$u(\cdot,0)\le 0$ in $\Omega$ and $u(\cdot,t)\le 0$ on $\pt\Omega$ for any
 $t\in [0,T]$,}
and also
 $u$ is in
$C([0,T]; L_\infty(\Omega)) \cap W^{1,\infty}(\epsilon,t;\,L_\infty(\Omega))$ for any $0<\epsilon<t\le T$,
while
$u(\cdot,t)\in H^1(\Omega)$ for any $t>0$.
Then $(D_t^\alpha +\LL-c)u\le 0$, understood in a weak sense, implies
\color{blue}$\sup_{\Omega}u(\cdot,t)\le 0$
$\forall\,t\in [0,T]$.
\end{theorem}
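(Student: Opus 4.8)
The plan is to apply the scalar Lemma~\ref{lem_main} to the essential spatial supremum of $\hat u:=e^{-\mu t}u$, with $\mu$ fixed so that $\mu^\alpha\ge\lambda$. Since $u=e^{\mu t}\hat u$, it suffices to show that $M(t):=\sup_\Omega\hat u(\cdot,t)\le 0$ for every $t$ (here and below $\sup_\Omega$ denotes the essential supremum, finite and, by $u\in C([0,T];L_\infty(\Omega))$, continuous in $t$). Suppose not: then $M:=\max_{[0,T]}M(t)>0$ is attained at some $t^*\in(0,T]$, with $t^*=0$ excluded by $u(\cdot,0)\le 0$. I will derive $D_t^\alpha v(t^*)-\lambda v(t^*)\le 0$ for the scalar function $v(t):=\sup_\Omega u(\cdot,t)=e^{\mu t}M(t)$; since then $v(0)\le 0$, $\hat v(t)=M(t)$, and $\hat v(t^*)=M=\max_{[0,t^*]}\hat v>0$, this contradicts Lemma~\ref{lem_main}.

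The spatial estimate is carried out in the spirit of \cite[Chap.\,8]{GTru}, localised at $t=t^*$. For $0<k<M$ take $\chi_k:=(\hat u(\cdot,t^*)-k)^+$, which is nonnegative and lies in $H^1_0(\Omega)$ because $u(\cdot,t^*)\le 0$ on $\pt\Omega$. On $\mathrm{supp}\,\chi_k$ one has $\hat u(\cdot,t^*)>k>0$, hence $\nabla\chi_k=e^{-\mu t^*}\nabla u$ there; the positive definiteness of $\{a_{ij}\}$ together with Young's inequality and the finiteness of $C_\LL$ in \eqref{C_LL} then give $\langle\LL u(\cdot,t^*),\chi_k\rangle\ge -C_\LL\,e^{-\mu t^*}\|(u(\cdot,t^*)-K)^+\|^2$, where $K:=k\,e^{\mu t^*}$, exactly as in the first-order-term estimate of \cite[Chap.\,8]{GTru}. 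Moreover $u(\cdot,t^*)>0$ on $\mathrm{supp}\,\chi_k$ and $c\ge-\lambda$ yield $\langle c\,u(\cdot,t^*),\chi_k\rangle\ge-\lambda\langle u(\cdot,t^*),\chi_k\rangle$. Inserting these two bounds into the weak inequality $\langle(D_t^\alpha+\LL+c)u(\cdot,t^*),\chi_k\rangle\le 0$ and dividing by $\langle 1,\chi_k\rangle>0$ produces, for the weighted average $v_k(t):=\langle u(\cdot,t),\chi_k\rangle/\langle 1,\chi_k\rangle$,
\[
D_t^\alpha v_k(t^*)\le\lambda\,v_k(t^*)+\delta_k,\qquad
\delta_k:=\frac{C_\LL\,e^{-\mu t^*}\|(u(\cdot,t^*)-K)^+\|^2}{\langle 1,\chi_k\rangle}\longrightarrow 0 \;\;(k\uparrow M),
\]
the decay following from $\|(u-K)^+\|^2\le(\sup_\Omega u(\cdot,t^*)-K)\,\|(u-K)^+\|_{L_1}$ and $\langle 1,\chi_k\rangle=e^{-\mu t^*}\|(u-K)^+\|_{L_1}$.

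For the temporal part I would replay the mechanism of the proof of Lemma~\ref{lem_main}: substituting the last inequality and $v_k(0)\le 0$ into the representation \eqref{Dhat_v} written for $v_k$, and using $\Lambda_\infty=\Gamma(1-\alpha)\mu^\alpha$ from \eqref{Lambda_def}, gives
\[
\bigl[\omega(t^*)+\Gamma(1-\alpha)(\mu^\alpha-\lambda)\bigr]\hat v_k(t^*)
+\int_0^{t^*}\!\pt_s\omega(t^*-s)\,\{\hat v_k(t^*)-\hat v_k(s)\}\,ds
\le\Gamma(1-\alpha)\,e^{-\mu t^*}\delta_k,
\]
with $\hat v_k=e^{-\mu t}v_k$. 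Now let $k\uparrow M$. As $\hat v_k(t)$ is a weighted average of $\hat u(\cdot,t)$, one has $\hat v_k(s)\le M(s)\le M$ for every $s$, while $\hat v_k(t^*)\to M$; hence the bracketed term tends to $[\omega(t^*)+\Gamma(1-\alpha)(\mu^\alpha-\lambda)]M\ge\omega(t^*)M>0$, the right-hand side tends to $0$, and $\liminf_{k\uparrow M}\{\hat v_k(t^*)-\hat v_k(s)\}\ge M-M(s)\ge 0$ pointwise.

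The main obstacle, and the only genuinely delicate point, is passing to the limit in the memory integral, whose kernel $\pt_s\omega(t^*-s)=\alpha(t^*-s)^{-\alpha-1}e^{-\mu(t^*-s)}$ is \emph{non-integrable} at the diagonal $s=t^*$, so Fatou's lemma cannot be applied globally. I would split the integral at $s=t^*-\eta$: on $[0,t^*-\eta]$ the kernel is bounded, so Fatou gives $\liminf_{k\uparrow M}\int_0^{t^*-\eta}\pt_s\omega(t^*-s)\{\hat v_k(t^*)-\hat v_k(s)\}\,ds\ge\int_0^{t^*-\eta}\pt_s\omega(t^*-s)\,(M-M(s))\,ds\ge 0$, whereas on $[t^*-\eta,t^*]$ the uniform-in-$k$ Lipschitz bound $|\hat v_k(t^*)-\hat v_k(s)|\le L\,(t^*-s)$, inherited from $u\in W^{1,\infty}(\epsilon,t;L_\infty(\Omega))$ (since $\hat v_k$ is an average of $\hat u$ and $\pt_t\hat u\in L_\infty$), tames the singularity and bounds that piece by $C\eta^{1-\alpha}$. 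Letting first $k\uparrow M$ and then $\eta\downarrow 0$ forces the limit of the memory integral to be $\ge 0$, so the displayed inequality degenerates to $0<[\omega(t^*)+\Gamma(1-\alpha)(\mu^\alpha-\lambda)]M\le 0$, the required contradiction. Equivalently, these estimates establish $D_t^\alpha v(t^*)-\lambda v(t^*)\le 0$ for $v=\sup_\Omega u(\cdot,\cdot)$, to which Lemma~\ref{lem_main} then applies verbatim.
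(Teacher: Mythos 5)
Your proposal is correct and follows essentially the same route as the paper's proof: the substitution $\hat u=e^{-\mu t}u$ with $\mu^\alpha\ge\lambda$, the level-set test function $(\hat u(\cdot,t^*)-k)^+$ normalized into a weighted average $v_k$, the spatial estimate via positive definiteness and \eqref{C_LL} (the paper isolates this as Lemma~\ref{lem_pos_def}), the representation \eqref{Dhat_v} with $\Lambda_\infty=\Gamma(1-\alpha)\mu^\alpha$, and the splitting of the singular memory integral near $s=t^*$ using the uniform-in-$k$ Lipschitz bound inherited from $W^{1,\infty}(\epsilon,t;\,L_\infty(\Omega))$. The only difference is bookkeeping: the paper ties the splitting point to $\varepsilon=M-k$ and writes a single explicit quantitative inequality before letting $\varepsilon\to0$, whereas you take limits via Fatou's lemma with an independent parameter $\eta$ -- equivalent in substance (your closing claim that this ``equivalently'' yields $D_t^\alpha v(t^*)-\lambda v(t^*)\le0$ for $v=\sup_\Omega u$ is an unneeded and not fully justified aside, but nothing in the argument rests on it).
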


To prove the above theorem we shall employ the following auxiliary lemma.

\begin{lemma}\label{lem_pos_def}
  Suppose that $w\in L_\infty(\Omega)\cap \color{blue}H^1(\Omega)$
  and
  {\color{blue}$w\le 0$ on $\pt\Omega$}, and, additionally,
  $0<M'<\sup_{\Omega}w\le M$ for some constants $M'$ and $M$.
  Then for the function $\chi_0:=(w-M')^+$,
  one has
  \beq\label{chi_C_ll}
  \langle\LL w,\chi_0\rangle\ge -C_{\LL}(M-M')\,\langle1,\chi_0\rangle,
  \eeq
  where
  $\LL=\LL(t)$ for any fixed $t\in(0,T]$ and
  $C_{\LL}$ is from \eqref{C_LL}.
\end{lemma}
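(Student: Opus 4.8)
The plan is to read the pairing $\langle\LL w,\chi_0\rangle$ as the bilinear form produced by integration by parts, namely
\[
\langle\LL w,\chi_0\rangle=\sum_{i,j=1}^d\int_\Omega a_{ij}\,\pt_{x_j}w\,\pt_{x_i}\chi_0\,dx+\sum_{i=1}^d\int_\Omega b_i\,\pt_{x_i}w\,\chi_0\,dx,
\]
and to reduce everything to the level set $\{w>M'\}$ on which $\chi_0=(w-M')^+$ lives. The first thing I would establish is that $\chi_0$ is an admissible test function, i.e.\ $\chi_0\in H^1_0(\Omega)$, so that no boundary term survives the integration by parts. Since $M'>0$, the set $\{w>M'\}$ lies inside $\{w>0\}$, where $w=w^+$; hence $\chi_0=(w^+-M')^+$, and applying the Lipschitz map $s\mapsto(s-M')^+$ (which fixes $0$) to $w^+\in H^1_0(\Omega)$ keeps the result in $H^1_0(\Omega)$ by the standard chain rule for Sobolev functions \cite[Chap.\,7]{GTru}. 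The same truncation theory supplies the gradient identity $\nabla\chi_0=\nabla w$ a.e.\ on $\{w>M'\}$ and $\nabla\chi_0=0$ elsewhere.

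Using that identity I would replace $\nabla w$ by $\nabla\chi_0$ in both integrals, which is legitimate because the two differ only where $\nabla\chi_0$ vanishes. The diffusion term then becomes $\sum_{i,j}\int_\Omega a_{ij}\,\pt_{x_j}\chi_0\,\pt_{x_i}\chi_0\,dx$, and positive definiteness of $\{a_{ij}\}$ bounds it below by $\int_\Omega\lambda_a|\nabla\chi_0|^2\,dx$. For the convection term $\sum_i\int_\Omega b_i\,\pt_{x_i}\chi_0\,\chi_0\,dx$ I would apply Young's inequality with the diffusion weight,
\[
\Bigl|\sum_i b_i\,\pt_{x_i}\chi_0\Bigr|\,\chi_0\le\lambda_a|\nabla\chi_0|^2+\frac{\sum_i b_i^2}{4\lambda_a}\,\chi_0^2,
\]
so that the $\lambda_a|\nabla\chi_0|^2$ contribution is cancelled exactly by the diffusion lower bound, leaving
\[
\langle\LL w,\chi_0\rangle\ge-\int_\Omega\frac{\sum_i b_i^2}{4\lambda_a}\,\chi_0^2\,dx.
\]

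Finally I would exploit the uniform bound $0\le\chi_0=(w-M')^+\le\sup_\Omega w-M'\le M-M'$ to write $\chi_0^2\le(M-M')\chi_0$, and combine this with the definition \eqref{C_LL} of $C_{\LL}$ to get
\[
\int_\Omega\frac{\sum_i b_i^2}{4\lambda_a}\,\chi_0^2\,dx\le C_{\LL}\,(M-M')\int_\Omega\chi_0\,dx=C_{\LL}(M-M')\,\langle1,\chi_0\rangle,
\]
which is precisely \eqref{chi_C_ll}. The only genuinely delicate step is the opening one: verifying $\chi_0\in H^1_0(\Omega)$ together with the truncation formula for $\nabla\chi_0$. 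Both hinge on the hypothesis $M'>0$, which is what lets the boundary information encoded in $w^+\in H^1_0(\Omega)$ (the meaning of ``$w\le0$ on $\pt\Omega$'') be transferred to $\chi_0$; once that is in hand, the remainder is a routine coercivity-plus-Young estimate.
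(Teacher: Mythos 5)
Your proposal is correct and follows essentially the same route as the paper: establish $\chi_0\in H^1_0(\Omega)$ with the truncation identity for $\nabla\chi_0$, integrate by parts to get the bilinear form over the level set, use positive definiteness of $\{a_{ij}\}$ plus Young's inequality with weight $\lambda_a$ (so the gradient terms cancel against the definition \eqref{C_LL} of $C_{\LL}$), and finish with $\chi_0^2\le(M-M')\chi_0$. The only difference is cosmetic: you spell out the Lipschitz-truncation justification of $\chi_0\in H^1_0(\Omega)$ that the paper delegates to \cite[Sec.\,8.1]{GTru}, and you write the quadratic bound in terms of $\nabla\chi_0$ where the paper uses $\nabla w$ on $\Omega_0$, which is the same thing.
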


\begin{proof}
First, note that
$\chi_0=\max\{0,\,w-M'\}$ is in $ H_0^1(\Omega)$
{\color{blue}(in view of $w\le 0$ on $\pt\Omega$)}
and has a nonempty support $\Omega_0\subset\Omega$.
Also, $\chi_0$ takes values in $[0, M-M']$, while
$\pt_{x_i}\chi_0=\pt_{x_i}\!w$ $\forall i$ in $\Omega_0$
(see also a more detailed discussion in \cite[Sec.\,8.1]{GTru}).

Now, applying an integration by parts
to the first sum in $\langle\LL w,\chi_0\rangle$,
and then using
the  positive definiteness of the coefficient matrix $\{a_{ij}\}$ and \eqref{C_LL},   yields
\begin{align}\notag
 \langle\LL w,\chi_0\rangle&=
\sum_{i,j=1}^d \int_{\Omega_0}\!\!a_{ij}\,(\pt_{x_j}\!w)\,(\pt_{x_i}\chi_0)  + \sum_{i}^d \int_{\Omega_0}\!\!  (b_i\,\pt_{x_i}\!w)\,\chi_0 \\
&\ge\int_{\Omega_0}\!\!\lambda_a|\nabla w|^2
-\int_{\Omega_0}\!\!2(\lambda_a C_{\LL})^{1/2}|\nabla w|\,|\chi_0|
\notag\\[0.3pt]&
\ge -C_{\LL}\|\chi_0\|^2.
\label{by_parts}
\end{align}
It remains to note that $\|\chi_0\|^2=\langle\chi_0,\chi_0\rangle \le (M-M')\,
\langle1,\chi_0\rangle $.
\end{proof}

\noindent{\it Proof of Theorem~\ref{th_weak}.}~
%
Let $\hat u:=e^{-\mu t}u$, where the constant
$\mu^\alpha\ge \lambda\ge 0  $.
%
As
$u\in C([0,T]; L_\infty(\Omega))$, so
$S(t):=\sup_{\Omega}\hat u(\cdot, t)$ is in $  C[0,T]$, with $S(0)\le 0$,
and it  suffices to prove that $S(t)\le 0$ $\forall t$.

Suppose that $\max_{t\in[0,T]}S(t)=S(t_0)=\sup_{\Omega}\hat u(\cdot, t_0)=M>0$ for some $t_0\in(0,T]$.
Next, for any arbitrarily small $\varepsilon\in(0,\frac12 M)$, set $M':=M-\varepsilon>0$
and
consider
$\chi_0:= (\hat u(\cdot, t_0)-M')^+$, which has a non-empty support $\Omega_0\subset\Omega$.
By Lemma~\ref{lem_pos_def},
one gets \eqref{chi_C_ll} with $w:=\hat u(\cdot, t_0)$ and $\LL=\LL(t_0)$.
So for $\chi:=\chi_0/\langle1,\chi_0\rangle$, one gets
$\langle\LL w,\chi\rangle \ge -C_{\LL}(M-M')$, or, equivalently,
$\langle\LL (t_0)\hat u(\cdot, t_0),\chi\rangle \ge -\varepsilon C_{\LL}$.
Also,
$(D_t^{\alpha}+\LL + c)u\le 0$  implies
$$
\langle (D_t^{\alpha}+\LL(t_0) + c(\cdot, t_0))u(\cdot, t_0),\,\chi\rangle\le 0
$$
for the nonnegative $\chi\in H_0^1(\Omega)$.
As $\hat u(\cdot, t_0)$ is nonnegative  in $\Omega_0$, so is $ u(\cdot, t_0)$,
so
$\langle c(\cdot, t_0)\,u(\cdot, t_0),\,\chi\rangle\ge -\lambda \langle u(\cdot, t_0),\,\chi\rangle$.
Combining these observations,
one concludes that 
$\langle D_t^{\alpha} u(\cdot, t_0),\chi\rangle -e^{\mu t_0}\,\varepsilon C_{\LL}-\lambda\langle u(\cdot, t_0),\chi\rangle\le 0.$

Hence, setting $v(t)=\langle  {\color{blue}u}(\cdot,t),\chi\rangle$ and $\hat v(t):=\langle \hat u(\cdot,t),\chi\rangle=e^{-\mu t}\,v(t)$
yields
\beq\label{main_cond_no}
D_t^\alpha v(t_0) -\lambda v(t_0)\le e^{\mu t_0}\,\varepsilon C_{\LL}, \qquad\quad v(0)\le 0.
\eeq
Note also that both $v$ and $\hat v$ are in $ C[0,T]\cap W^{1,\infty}(\frac12 t_0, t_0)$.
Additionally, in view of $\hat u(\cdot,t_0)\ge M'=M-\varepsilon$ in $\Omega_0$ and $\langle 1,\chi\rangle =1$,
one gets
$$
\hat v(t_0)\ge M-\varepsilon, \quad \{\hat v(t_0)-\hat v( s)\}\ge -\varepsilon\;\;\forall\,s, \quad
\{t_0-s\}^{-1}|\hat v(t_0)-\hat v( s)|\le C_{0}\;\;\forall\,s\in({\textstyle\frac12} t_0,t_0),
$$
for some constant $C_0=C_0(t_0)$.
{\color{blue}(Importantly, $C_0$ depends only on the norm of $\hat u$ in ${W^{1,\infty}({\textstyle\frac12} t_0,t_0;\,L_\infty(\Omega))}$, so is independent of $\chi$, or, equivalently, of $\eps$.)}
In view of the similarity of  \eqref{main_cond_no} with \eqref{main_cond},
an inspection of the proof of
Lemma~\ref{lem_main} shows that now we have a version of
\eqref{main_lem_ine}
with $0$ in the left-hand side replaced by
$\Gamma(1-\alpha)\,\varepsilon C_{\LL}$, i.e.
$$
\Gamma(1-\alpha)\,\varepsilon C_{\LL}\ge
\omega(t_0)\,\hat  v(t_0)+\int_{0}^{t_0}\!
\pt_s\omega(t_0- s)\, \{\hat v(t_0)-\hat v( s)\}\,ds.
$$
Here, recalling the definition \eqref{omega} of $\omega(t)$, for the integral kernel one has
$$
\int_0^{t_0-\varepsilon}\!\!\!\!\!
\pt_s\omega(t_0- s)\,ds
\le \omega(\varepsilon)\le \varepsilon^{-\alpha},
\;
\int_{t_0-\varepsilon}^{t_0}\!\!
\{t_0-s\}\,\pt_s\omega(t_0- s)\,ds=\int_0^\varepsilon\!\! \alpha s^{-\alpha}e^{-\mu s}ds\le
\frac{\alpha\varepsilon^{1-\alpha}}{1-\alpha}.
$$
So combining the above observations yields
$$
\Gamma(1-\alpha)\, \varepsilon C_{\LL}\ge
\omega(t_0)\,(M-\varepsilon)
-\varepsilon\, \varepsilon^{-\alpha}
-C_0\frac{\alpha}{1-\alpha}\varepsilon^{1-\alpha}.
$$
As $\varepsilon$ here is arbitrarily small, one concludes that $0\ge \omega(t_0)\,M$, or $M\le 0$.
%
\hfill$\square$
\bigskip

\section{Further extensions}

\subsection{Periodic and mixed boundary conditions}

Theorem~\ref{th_weak} applies to the case of Neumann/Robin boundary condition of the form 
${\mathcal B} u+g\, u:=\sum_{i,j=1}^d \nu_i\,a_{ij}\,\pt_{x_j}\!u+g\, u\le 0$ on $\pt\Omega$
(instead of $u\le 0$)
with any weight $g\ge 0$,
where $\{\nu_i\}_{i=1}^d$ are the Cartesian coordinates of the outward normal vector to $\pt\Omega$.
Note that ${\mathcal B} u+g\, u\le 0$ on $\pt\Omega$ combined with
$(D_t^{\alpha}+\LL + c)u\le 0$ in $\Omega$
for a weak solution $u$
 is  understood in the sense that
$\color{blue}\langle (D_t^{\alpha}+\LL + c)u,\,\chi\rangle+\int_{\pt\Omega}({\mathcal B} u+g\, u)\chi\le 0$
for all nonnegative $\chi\in H^1(\Omega)$.

{\color{blue}For this case, we require a version of Lemma~\ref{lem_pos_def} with
the assumption $w\le 0$ on $\pt\Omega$ dropped, and
\eqref{chi_C_ll} modified to
  $\langle\LL w,\chi_0\rangle+\int_{\pt\Omega}({\mathcal B} w+g\, w)\chi_0\ge -C_{\LL}(M-M')\,\langle1,\chi_0\rangle$.
  Note that $\chi_0$ is no longer in $H^1_0(\Omega)$, which leads to a minor change in the proof  in that the integration by parts in the bound \eqref{by_parts} for
$\langle\LL w,\chi_0\rangle$
yields an additional term
$ \int_{\pt\Omega}(-{\mathcal B}w)\chi_0$. So once $\int_{\pt\Omega}({\mathcal B} w+g\, w)\chi_0$ is added, the bound includes one additional
term $ \int_{\pt\Omega}(gw) \chi_0\ge 0$, which is nonnegative, so can be dropped.
}


Similarly, Theorem~\ref{th_weak} is easily extended for mixed and periodic boundary conditions.

\subsection{Multiterm time-fractional parabolic problem}\label{ssec_multiterm}
Our results seamlessly extend to  variable-coefficient
multiterm time-fractional parabolic equations  (see \cite{NK_MSt_multiterm} and references therein) of the form
\beq\label{prob_multi}
\sum_{i=1}^{\ell}q_i(t)\, D _t ^{\alpha_i} u(x,t)+ \LL u+c(x,t)\, u=f(x,t)
,\quad\mbox{where}\quad
c(x,t)\ge-\lambda,
\eeq
with
$0<\alpha_\ell<...<\alpha_2< \alpha_1 \le 1$ for some positive integer $\ell$,
under the conditions $q_i(t)\ge 0$, $i=1,\ldots, \ell$, and $\sum_{i=1}^{\ell}q_i(t)\ge C_q>0$  $\forall\, t\in[0,T]$.
Setting $\mu>\max_{i=1\ldots, \ell}\{C_q^{-1}\lambda\}^{1/\alpha_{i}}$
guarantees that $\lambda< \sum_{i=1}^{\ell} q_i \mu^{\alpha_{i}}$
{\color{blue}(with the strict inequalities used here to accommodate a possible $\alpha_1=1$)}.
Then Lemma~\ref{lem_main} holds true with $D_t^\alpha$ replaced by
$D_t^{\bar\alpha}:=\sum_{i=1}^{\ell}q_i(t_0)\, D _t ^{\alpha_i}$ (and, if $\alpha_1=1$, under an additional assumption that $v\in C^1(0,T]$).
Hence, if $\alpha_1<0$, one immediately gets versions of Theorems~\ref{th_max_pr} and~\ref{th_weak},
while if $\alpha_1=1$, one only gets Theorem~\ref{th_max_pr} under the additional assumption that
$u(x,\cdot)\in C^{1}(0,T]$ for any $x\in\Omega$.

\subsection{Variable-order time-fractional parabolic equations}
Variable-order time-fractional equations
have received a
lot of attention in recent years (see \cite{Hong} and its references).
%
One example from \cite{Hong} involves $D_t^{\bar\alpha(t)}=\pt_t u+q(t) D_t^{\alpha(t)}$,
using  \eqref{CaputoEquiv} with a variable
$\alpha=\alpha(t)\in(0,1)$ and $q(t)\ge0$.
%
Note that Lemma~\ref{lem_main} immediately applies to this case under the condition that
$\color{blue}0\le \lambda< \mu+q(t_0)\mu^{\alpha(t_0)}$
(and with the obvious change
$D_t^{\bar\alpha(t_0)} v(t_0)-\lambda v(t_0)\le 0$ in \eqref{main_cond}).
%
Furthermore, all conclusions of \S\ref{ssec_multiterm} apply to a variable-order version of \eqref{prob_multi}
with $\alpha_i=\alpha_i(t)$, $i=1,\ldots,\ell$.

\section*{Acknowledgements}
{\color{blue}The author thanks 
Martin Stynes, who provided many useful comments during the preparation of this paper,
 and the anonymous referees for their helpful and constructive suggestions.
 This research was partially supported by  Science Foundation Ireland under Grant number 18/CRT/6049.}

\end{document}